\documentclass{amsart}


\usepackage[T1]{fontenc}
\usepackage{lmodern}
\usepackage{comment}



\usepackage[francais,english]{babel}



\usepackage{latexsym}
\usepackage{amssymb}



\usepackage[centering,textheight=50.5pc,textwidth=30pc]{geometry}
\geometry{a4paper}


\usepackage[pdfstartview=XYZ]{hyperref}
\urlstyle{tt}


\theoremstyle{plain}
\newtheorem{theorem}{Theorem}[section]
\newtheorem{proposition}[theorem]{Proposition}
\newtheorem{lemma}[theorem]{Lemma}
\newtheorem{corollary}[theorem]{Corollary}

\theoremstyle{definition}
\newtheorem{definition}[theorem]{Definition}

\theoremstyle{remark}
\newtheorem{remark}[theorem]{Remark}

\theoremstyle{conjecture}
\newtheorem{conjecture}[theorem]{Conjecture}




\def\Z{\mathbb{Z}}




\title[Orders of primes and a conjecture of Ryser]{Quadratic residues and a new infinity of orders for which a conjecture of 
Ryser about  Circulant Hadamard matrices holds}

\date{\today}




\author[L. H. Gallardo]{Luis H. Gallardo}

\address{University Of Brest,
Mathematics\\
6, Av. Le Gorgeu\\
C.S. 93837\\
29238 Brest Cedex 3, France.}

\email{Luis.Gallardo@univ-brest.fr}


\subjclass[2010]{Primary 11A07; Secondary  15A24, 15B34}

\keywords{Quadratic residues, Weighing matrices, Circulant Hadamard matrices, Ryser's Conjecture}


\begin{document}



\begin{abstract}
For every positive integer $k$ such that $k>1,$ there are an infinity of odd integers $h$
with  $\omega(h) =k$ distinct prime divisors such that there
do not exist a Circulant Hadamard matrix $H$ of order $n=4h^2.$
Moreover, our main result implies that for
all of the odd numbers $h$, with $1< h < 10^{13}$ there is no Circulant Hadamard matrix of order $n=4h^2.$ 
\end{abstract}

\maketitle


\section{Introduction}

A complex matrix $H$ of order $n$ is \emph{complex Hadamard} if $HH^{*} = nI$, where $I_n$ is the identity matrix of order $n$,
and if every entry  of $H/\sqrt{n}$ 
is in the complex unit circle. Here, the $*$ means transpose and conjugate. 
When such $H$ has real entries, so that $H$ is a $\{-1,1\}-$matrix, $H$ is called \emph{Hadamard}.  If $H$ is Hadamard
and circulant, say $H=circ(h_1,\ldots,h_n)$, that means that the $i$-th row $H_i$ of $H$ is given by 
$H_i = [h_{1-i+1},\ldots, h_{n-i+1}]$, the subscripts being taken modulo $n,$ for example
$H_2 =[h_n,h_1,h_2, \ldots,h_{n-1}].$ A long standing conjecture of Ryser (see \cite[pp. 134]{ryser})
is:

\begin{conjecture}
\label{mainryser}
Let $n \geq 4.$  If $H$ is a circulant Hadamard matrix of order $n$, then $n=4.$
\end{conjecture}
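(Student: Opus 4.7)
The plan is to attack Conjecture~\ref{mainryser} through the classical reduction to a difference-set problem in a cyclic group, followed by algebraic-number-theoretic constraints on the resulting character sums. First, using the well-known observation (going back to Turyn) that the existence of a circulant Hadamard matrix of order $n \geq 4$ forces $n = 4h^2$ for some odd positive integer $h$, one reduces the conjecture to proving that no such matrix exists for any odd $h > 1$. If $H = circ(c_0, \ldots, c_{n-1})$ is circulant Hadamard with $c_i \in \{\pm 1\}$, then the element $\theta = \sum c_i X^i$ in the group ring $\Z[\Z_n]$ satisfies $\theta(X)\theta(X^{-1}) = n$. Applying each character $\chi$ of $\Z_n$, this translates into $|\chi(\theta)|^2 = n$, so that $\chi(\theta)$ is an algebraic integer in the cyclotomic field $\Q(\zeta_n)$ of absolute value exactly $2h$.

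Next, I would exploit the equation $\chi(\theta)\overline{\chi(\theta)} = 4h^2$ inside the ring of integers of $\Q(\zeta_n)$. For each prime $p \mid h$ that is self-conjugate modulo $n$ (some power of $p$ is congruent to $-1$ modulo the $p'$-part of $n$), the $p$-adic valuation of $\chi(\theta)$ is forced to be exactly half that of $2h$ in every prime ideal above $p$, yielding strong divisibility restrictions on the support of $\theta$. Combined with the multiplier theorem (each prime divisor of $4h^2$ coprime to $n$ is a numerical multiplier of the associated $(4h^2,2h^2-h,h^2-h)$ difference set) and with Schmidt's field-descent machinery, one obtains a heavily over-determined system of congruences that the $\pm 1$-sequence $(c_0,\ldots,c_{n-1})$ must satisfy simultaneously. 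The goal is to show this system is inconsistent for every odd $h > 1$.

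The hard part—the part that has kept Conjecture~\ref{mainryser} open for more than sixty years—is precisely the last step. None of the existing tools (self-conjugacy, Turyn's exponent bound, McFarland's bound, Schmidt's field descent, or the new quadratic-residue arguments developed in the present paper) are known to dispose of \emph{every} odd $h > 1$; each of them leaves behind an infinite residual set of $h$ for which the character values $\chi(\theta)$ cannot be excluded by purely local means. A complete proof would therefore require either a genuinely new global obstruction, for instance a nontrivial cancellation identity that couples characters of different conductors, or a combinatorial rigidity argument on the $\pm 1$ support that goes beyond the group-ring equation $\theta\bar\theta = n$. I would first execute the reduction and the character-sum translation rigorously, then attempt to combine field descent with a refined counting on quadratic residues modulo the prime divisors of $h$ (in the spirit of the arguments the authors use to handle $h<10^{13}$) in the hope of forcing a universal contradiction; but I expect this final global step to be the genuine obstacle, and indeed the statement as worded is, to my knowledge, the principal open problem in the area.
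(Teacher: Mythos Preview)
The statement you are asked to prove is Conjecture~\ref{mainryser}, and the paper does \emph{not} prove it: it is stated explicitly as a conjecture, attributed to Ryser, and the paper's contribution is only the partial result Theorem~\ref{mainar} (infinitely many $h$ with a prescribed number of prime factors are excluded) together with the computational Proposition~\ref{computA} (all odd $h$ with $1<h<10^{13}$ are excluded). There is therefore no ``paper's own proof'' to compare against.

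Your write-up is accordingly not a proof but a survey of the standard reduction (circulant Hadamard $\Rightarrow$ $n=4h^2$, group-ring equation $\theta\bar\theta=n$, character sums of absolute value $2h$, self-conjugacy, multipliers, field descent), followed by an honest admission that the decisive global step is missing. That admission is correct: every technique you list---Turyn's bound, self-conjugacy, Schmidt's descent, and the quadratic-residue/order argument of this paper via Lemma~\ref{maincomp}---is known to leave an infinite residual set of $h$. In particular, the paper's own Lemma~\ref{maincomp} only yields a contradiction when some prime $p\mid h$ has \emph{even} multiplicative order modulo $2(h/p^r)^2$; the authors themselves note (end of the Introduction) that roughly $5\%$ of small $h$ survive this test. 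So your ``hope of forcing a universal contradiction'' by combining field descent with the quadratic-residue counting has no mechanism behind it, and the proposal as written is a plan with an explicitly acknowledged, and genuine, gap at the only non-routine step.

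In short: the proposal is not a proof, you correctly identify why it is not a proof, and the paper does not claim to have one either.
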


Details about previous results on the conjecture and a short sample of recent related papers are in
\cite{turyn1}, \cite{Leung}, \cite{BorMoss1}  \cite{BorMoss2}, \cite{Craigen},
\cite{brualdi}, \cite{EGR} and the bibliography therein, \cite{lhg}.

The object of the present paper is to substantially extend the range of known $n$'s for which Ryser's Conjecture holds.

Our main result is:

\begin{theorem}
\label{mainar}
For every positive integer $k$ such that $k>1,$ there are an infinity of odd integers $h$
with  $\omega(h) =k$ distinct prime divisors such that there
do not exist a circulant Hadamard matrix $H$ of order $n=4h^2.$
\end{theorem}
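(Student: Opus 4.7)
The plan is to prove Theorem \ref{mainar} by combining a quadratic-residue obstruction with a Dirichlet-density construction. First I would establish an explicit sufficient condition on the prime factors of an odd integer $h$ that rules out the existence of a circulant Hadamard matrix of order $n=4h^2$; then I would show that, for each $k>1$, infinitely many squarefree $h=p_1\cdots p_k$ satisfy this condition.

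\textbf{Step 1 (structural reduction).} I would recall that a circulant Hadamard matrix of order $n=4h^2$ is equivalent to an $(n,n/2-h,n/4-h)$ Hadamard difference set in the cyclic group $\Z/n\Z$. Writing the associated Hall polynomial $P(x)=\sum h_i x^{i-1}\in\Z[x]/(x^n-1)$, one has $P(1)=\pm 2h$ and $|P(\zeta)|^2=n$ for every nontrivial $n$-th root of unity $\zeta$. Evaluating at the characters of $\Z/h\Z$ pulls back these norm equations to the cyclotomic integers $\Z[\zeta_{p_i}]$, and Gauss-sum identities then translate the resulting multiplicative relations into statements about Legendre symbols $\bigl(\tfrac{p_i}{p_j}\bigr)$ and $\bigl(\tfrac{2}{p_i}\bigr)$.

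\textbf{Step 2 (extracting the residue condition).} From the cyclotomic equations above I would isolate a system of Legendre-symbol conditions on the $p_i$ whose violation produces a contradiction. The target is a condition of the shape: there exist indices $i\neq j$ for which $\bigl(\tfrac{p_i}{p_j}\bigr)$ takes a prescribed value, together with supplementary constraints modulo $8$ coming from the prime $2$.

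\textbf{Step 3 (the infinitude).} Once such a residue condition is in hand, I would fix $k-1$ primes $p_1,\dots,p_{k-1}$ that are mutually compatible, which is possible by a finite sequence of applications of quadratic reciprocity. The residue condition on $p_k$ then becomes a set of congruences modulo $8p_1\cdots p_{k-1}$, so Dirichlet's theorem on primes in arithmetic progressions supplies infinitely many admissible primes $p_k$. For each such $p_k$ the product $h=p_1\cdots p_k$ is odd, squarefree, has $\omega(h)=k$, and satisfies the obstruction, so no circulant Hadamard matrix of order $4h^2$ exists.

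\textbf{Main obstacle.} The hard step is Step 2: identifying a quadratic-residue obstruction that is simultaneously strong enough to preclude a circulant Hadamard matrix and permissive enough that the arithmetic progressions produced in Step 3 are nonempty for every $k>1$. Classical self-conjugacy and field-descent obstructions typically impose rigid local conditions on every single prime divisor, and the delicate point is to balance them so that an entire parametric family of $h$'s with exactly $k$ prime factors can be produced for arbitrary $k$.
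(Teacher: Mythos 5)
Your overall architecture --- extract a quadratic-residue constraint on the prime factors of $h$ that is forced by the existence of a circulant Hadamard matrix of order $4h^2$, then violate it using reciprocity and Dirichlet --- is the same as the paper's. But the proposal has a genuine gap exactly where you flag your ``main obstacle'': Step 2 is never carried out, and it is the entire mathematical content of the theorem. The route you sketch (Hall polynomial of $H$, norm equations $|P(\zeta)|^2=n$ at characters, Gauss-sum identities) is the classical Turyn-style machinery, and applied to $H$ itself it is not known to produce the Legendre-symbol obstruction you need; the paper even records this explicitly (Remark \ref{obs}: applying the key reduction to the full circulant weighing matrix $H$, of order $n$ and weight $n$, yields no contradiction). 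Without a concrete, proved obstruction, Step 3 has nothing to feed on.

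What the paper does instead is a specific construction you are missing: write $H$ in $2\times 2$ block-circulant form $\bigl[\begin{smallmatrix} A & B\\ B & A\end{smallmatrix}\bigr]$ and pass to $C=\tfrac{A+B}{2}$, a \emph{circulant weighing matrix} of order $n/2=2h^2$ and weight $n/4=h^2$ (Lemmas \ref{weighing} and \ref{weighh}). Arasu's reduction theorem for circulant weighing matrices (Lemma \ref{arasumain}) then applies to $C$ with $p^a\,\|\,2h^2$ and weight $p^{2r}s^2$, and forces: for every prime $p$ with $p^r\,\|\,h$ and $s=h/p^r$, the order $o_{2s^2}(p)$ must be odd (Lemma \ref{maincomp}) --- i.e.\ $p^t\equiv -1 \pmod{2s^2}$ is impossible. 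Odd multiplicative order immediately makes $p$ a quadratic residue modulo every prime factor $q$ of $s$, and by symmetry one gets $\bigl(\tfrac{p}{q}\bigr)=\bigl(\tfrac{q}{p}\bigr)=1$ for all pairs of prime divisors of $h$ (Lemma \ref{gauss}). The violation step is then even easier than your Dirichlet argument: by quadratic reciprocity one can pick two primes $p_1,p_2$ (e.g.\ both $\equiv 3\pmod 4$) with $\bigl(\tfrac{p_1}{p_2}\bigr)=1$ and $\bigl(\tfrac{p_2}{p_1}\bigr)=-1$, and set $h=p_1p_2\cdots p_k$; infinitely many such choices exist for each $k>1$. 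So the missing idea is not a cleverer Gauss-sum computation but the halving trick producing a weighing matrix of odd square weight $h^2$, which is what lets Arasu's theorem bite.
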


Our result is a simple consequence (see Lemma \ref{maincomp}) of a deep result of Arasu 
(see \cite[Theorem 4, part (i)]{Arasu} and Lemma \ref{arasumain} below).   
By using the Hadamard-Barker 's data in the web site of M. Mossinghoff (see \cite{webmoss}) and our
main key Lemma  \ref{maincomp} below, we are also able to prove (by computer computations) the following new
result.

\begin{proposition}
\label{computA}
Let $S$ be the set of all integers $n=4h^2$ with odd $h$
such that $1<h < 10^{13}.$
For all $s \in S$ there is no circulant Hadamard matrices of order $n.$
\end{proposition}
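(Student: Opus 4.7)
The plan is to combine two ingredients that the excerpt already advertises: the computable criterion supplied by Lemma~\ref{maincomp}, and the tabulated Hadamard--Barker nonexistence data maintained by Mossinghoff~\cite{webmoss}. The proposition asserts that every odd $h$ with $1<h<10^{13}$ can be handled by one or the other. So the proof is essentially an organised enumeration together with a verification that the union of the two exclusion methods covers the whole range.

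First I would fix a convenient form for the criterion of Lemma~\ref{maincomp}. Since that lemma descends from Arasu's Lemma~\ref{arasumain} and from the quadratic-residue machinery announced in the title, I expect it to reduce the nonexistence of a circulant Hadamard matrix of order $4h^2$ to a condition on the prime divisors of $h$ (for instance, a Legendre-symbol or congruence condition on the primes $p\mid h$). If so, the test for a given $h$ is essentially the cost of factoring $h$ plus $O(\omega(h))$ symbol evaluations, which is cheap. One then iterates over odd $h<10^{13}$ by iterating over their prime factorisations (or over their smallest prime factor), applies the test, and records the $h$ for which the criterion of Lemma~\ref{maincomp} does \emph{not} already rule out $n=4h^2$.

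Second, for the residual set of $h$ not eliminated this way, I would consult Mossinghoff's tables, which tabulate precisely those $h$ below a certain bound for which the existence of a circulant Hadamard matrix of order $4h^2$ has been excluded by the accumulated Hadamard--Barker theory (Turyn's bound, self-conjugacy arguments, field-descent, etc.). One then checks that every surviving $h<10^{13}$ appears in that database, in which case the proof is complete.

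The main obstacle is purely computational: there are on the order of $10^{12}$ odd integers below $10^{13}$, and the search space has to be traversed efficiently. A naive loop is hopeless, so the enumeration has to be organised by factorisation, ideally sieving by small primes and pruning whole arithmetic progressions of $h$ as soon as Lemma~\ref{maincomp} disposes of a candidate divisor structure. The credibility of the proof then rests on two things I would double-check: that the implementation of the Lemma~\ref{maincomp} test is bug-free (ideally verified by spot-checking against known small cases from \cite{webmoss}), and that Mossinghoff's tabulated range really meets the range where Lemma~\ref{maincomp} starts to be universally effective, so that the two methods together leave no gap in $1<h<10^{13}$.
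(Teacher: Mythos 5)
You have the right two ingredients --- Lemma~\ref{maincomp} plus the prior Hadamard--Barker exclusions compiled by Borwein and Mossinghoff --- and the logical structure of your argument (the range $1<h<10^{13}$ is covered by the union of the two methods) is exactly the paper's. But your execution plan runs the computation in the infeasible direction, and the step you propose to make it feasible would not work. You propose to enumerate all odd $h<10^{13}$ (about $5\cdot 10^{12}$ integers), apply the test of Lemma~\ref{maincomp} to each, and then look up the survivors in \cite{webmoss}. The test in Lemma~\ref{maincomp} is not a Legendre-symbol or congruence condition on the primes dividing $h$: it asks, for each prime power $p^r\,\|\,h$ with cofactor $s=h/p^r$, whether the multiplicative order of $p$ modulo $2s^2$ is odd. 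This depends on the full factorization of $h$ and on order computations modulo $2s^2$ (hence on the factorization of $\lambda(2s^2)$), so it does not respect arithmetic progressions in $h$ and cannot be used to "prune whole arithmetic progressions" as you suggest. Note also that the test says nothing when $h$ is a prime power (then $s=1$, $m=2$, and every order is $1$), so those $h$ must come from the other ingredient in any case.

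The paper avoids all of this by running the two filters in the opposite order: by \cite{BorMoss2}, the accumulated prior results already eliminate every $h$ in the range except for an explicit list of $1371$ exceptional values (supplied by Mossinghoff as a data file), and the paper simply verifies by a short program that each of these $1371$ values has some prime divisor $p$ with $o_{2s^2}(p)$ even, so that Lemma~\ref{maincomp} disposes of it --- a computation of about seven minutes. So the missing idea in your proposal is not a mathematical one but the crucial organizational fact that the "residual set" is already known, finite, small, and explicitly tabulated; without that, your enumeration over all odd $h<10^{13}$ is not a proof that can be carried out.
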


These results implies corresponding results for the existence of Barker sequences (see section \ref{barkers}).
Of course, by using Lemma \ref{maincomp} combined with results obtained by other methods it is possible
to improve our numerical results herein.  For example, by applying the lemma to already known $h$'s satisfying $h < 10^{24}$
(see some of them in the web site already cited) and for which all other methods have failed, etc.  Two more examples: (a)
In about $6$ seconds computation in an old computer our Lemma \ref{maincomp} eliminated the only  possible obstruction known for $h$, namely
$h = 31540455528264605$ in order that a Barker sequence exist with $13 < 4h^2 < 10^{33}$, (see \cite[Theorem 1]{BorMoss2}).
(b) In about $3$ seconds the first $6$ values of $h$ in between $10^{16.5}$ and $5\cdot 10^{24}$
$$ 
[66687671978077825,866939735715011725,1293740836374709805,
$$
$$6468704181873549025,
16818630872871227465,
84093154364356137325];
$$
(over $18$) in  \cite[Table $2$]{BorMoss2} were also eliminated
as before. However, it is easy to see that there are values of $h$ that satisfy all the assumptions of Lemma  \ref{maincomp},
besides the assumption on the possible existence of $H$.
Indeed some experiments with small values of $h$, say $h \leq 10000,$ suggest that, at least for these values,
there exist about $5/100$ of $h$'s for which
all the orders appearing in the lemma are odd.

\section{Some tools}

First of all we recall the notion of a weighing matrix.

\begin{definition}
\label{weig}
Let $n$ be a positive integer.  Let $k$ be a positive integer. A \emph{weighing} matrix $W$ of order $n$
and weight $k$ is an $n \times n$ matrix $W$ with all its entries belonging to the set $\{-1,0,1\}$ such that
$$
W W^{T} = k I_n
$$
where the ``$T$" means ``transpose'' and  $I_n$ is the identity matrix of order $n.$
\end{definition}

We recall the result of Arasu (\cite[Part (i) of Theorem 4]{Arasu}).

\begin{lemma}
\label{arasumain}
Let $n,k$ be positive integers such that $n=p^a\cdot m$, $k = p^{2b} \cdot u^2$, where $a,b,m,u$ are
positive integers such that the prime number $p$ does not divide $m$
and $p$ does not divide $u.$
Assume that there exists an integer $t$ such that
$$
p^t \equiv -1 \pmod{m}.
$$
If there exists a weighing matrix $W$ of order $n$ and of weight $k$ that is circulant then
$p=2$ and $b=1.$
\end{lemma}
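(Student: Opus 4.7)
The plan is to translate the matrix equation into a multiplicative identity in the group algebra $R = \mathbb{Z}[G]$ with $G = \Z/n\Z$. Writing the first row of $W$ as $D(x) = \sum_{i=0}^{n-1} w_i x^i$ with $w_i \in \{-1,0,1\}$, the weighing condition $WW^T = kI_n$ becomes
\[
D(x)\,D(x^{-1}) \equiv k \pmod{x^n - 1}.
\]
The hypothesis $p^t \equiv -1 \pmod{m}$ is the classical \emph{self-conjugacy} of $p$ modulo $m$: every prime of $\mathbb{Z}[\zeta_m]$ above $p$ is fixed by complex conjugation.

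First I would apply characters of $G$. For any character $\chi$ of $G$ whose order divides $m$, one obtains $\chi(D)\,\overline{\chi(D)} = k = p^{2b} u^2$ in $\mathbb{Z}[\zeta_m]$. Because $p \nmid m$, $p$ is unramified in $\mathbb{Z}[\zeta_m]$, and self-conjugacy forces $v_{\mathfrak p}(\chi(D)) = v_{\mathfrak p}(\overline{\chi(D)}) = b$ at every prime $\mathfrak p \mid p$; hence $p^b \mid \chi(D)$ in $\mathbb{Z}[\zeta_m]$ for all such characters. Fourier inversion on $\Z/m\Z$ (whose order is coprime to $p$) then lifts this to the group ring: the image $\bar D \in \mathbb{Z}[\Z/m\Z]$ of $D$ modulo $x^m - 1$ factors as $\bar D = p^b \tilde D$, and reducing $D(x)D(x^{-1}) = k$ modulo $x^m - 1$ yields $\tilde D(x)\,\tilde D(x^{-1}) = u^2$ in $\mathbb{Z}[x]/(x^m-1)$.

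Next I would exploit that each coefficient of $\bar D$ is a sum of $p^a$ entries from $\{-1,0,1\}$, so lies in $[-p^a, p^a]$; the divisibility by $p^b$ then confines each coefficient of $\tilde D$ to $[-p^{a-b}, p^{a-b}]$. Combined with the autocorrelation identity $\sum_j \tilde d_j^{\,2} = u^2$ (the constant term of $\tilde D(x)\tilde D(x^{-1})$), the augmentation value $\sum_j \tilde d_j = \pm u$, and the parallel constraints furnished by characters of $p$-power order in $G$ (which also satisfy $|\chi(D)|^2 = k$), this yields a rigid system of Diophantine conditions on the support of $\tilde D$.

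The hard part, and the technical heart of Arasu's argument, is to show that this combined system is inconsistent unless $p = 2$ and $b = 1$. I expect this step to rely on a Ma-type sparsity bound for a group-ring element with prescribed autocorrelation, or on a field-descent/congruence argument modulo $p^2$, exploiting that $2$ is the unique prime for which $\{-1,0,1\}$-coefficients can realise the cancellations forced by the previous divisibilities. Carrying this final combinatorial step through, rather than the character-theoretic setup of the earlier paragraphs, is where any proof attempt will spend most of its effort.
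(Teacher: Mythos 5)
The paper does not prove this lemma at all: it is imported verbatim as a known result, namely part (i) of Theorem 4 of Arasu's paper cited as \cite{Arasu}, and the author explicitly describes his own contribution as a ``simple consequence of a deep result of Arasu.'' So there is no in-paper argument to measure your sketch against; the only meaningful comparison is with Arasu's original proof, which does indeed live in the character-theoretic, self-conjugacy world your first two paragraphs describe.

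That said, your proposal has a genuine gap, and it is exactly the one you flag yourself. The setup is correct and standard: translating $WW^T=kI_n$ into $D(x)D(x^{-1})\equiv k \pmod{x^n-1}$, using $p^t\equiv -1\pmod m$ to get self-conjugacy of $p$ in $\mathbb{Z}[\zeta_m]$, concluding $p^b\mid \chi(D)$ for characters of order dividing $m$, and Fourier-inverting (legitimately, since $p\nmid m$) to extract $\bar D=p^b\tilde D$ with $\tilde D(x)\tilde D(x^{-1})=u^2$ and coefficients bounded by $p^{a-b}$. But everything up to that point holds for arbitrary $p$ and $b$; none of it yet distinguishes $p=2$, $b=1$ from any other case. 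The entire content of the lemma is the final step, and for that you offer only a guess at what kind of tool might work (``a Ma-type sparsity bound \ldots or a field-descent/congruence argument modulo $p^2$'') without executing it. As written, the proposal reduces the problem to a ``rigid Diophantine system'' plus an unproven assertion that this system forces $p=2$ and $b=1$; that assertion \emph{is} the theorem. The attempt is therefore an accurate plan for the routine half of a proof, not a proof.
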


We use the obvious decomposition below of a circulant Hadamard matrix of  even order $n$ in four blocks of order $n/2,$
(see \cite{lhg} for another result based on the same decomposition), in order to build a weighing matrix attached to $H$.

\begin{lemma}
\label{weighing}
Let $H = circ(h_1, \ldots,h_n)$ be a circulant Hadamard matrix of order $n.$ Then
\begin{itemize}
\item[(a)] 
\begin{equation*}
H=
\begin{bmatrix}
A & B\\
B & A\\
\end{bmatrix}
\end{equation*}
where $A,B$ are matrices of order $\frac{n}{2}.$
\item[(b)]
$K=A+B$ is circulant with entries in $\{-2,0,2\}.$
\end{itemize}
\end{lemma}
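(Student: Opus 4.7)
The plan is to unwind the circulant structure of $H$ via the identity $H_{i,j} = h_{j-i+1}$, where subscripts are understood modulo $n$. Since $H$ is Hadamard of order $n \geq 4$, the order $n$ is even (in fact divisible by $4$), so splitting $H$ into four blocks of size $n/2$ is legitimate.

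For part (a), I would compute, for $1 \leq i, j \leq n/2$, the entries of the two bottom blocks. The bottom-right entry is
\[
H_{i+n/2,\, j+n/2} = h_{(j+n/2)-(i+n/2)+1} = h_{j-i+1} = A_{i,j},
\]
and the bottom-left entry is
\[
H_{i+n/2,\, j} = h_{j-i+1-n/2} \equiv h_{j-i+1+n/2} \pmod n,
\]
using the key identity $-n/2 \equiv n/2 \pmod n$. The right-hand side is precisely $B_{i,j} = H_{i,\,j+n/2}$, which finishes (a).

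For part (b), I would introduce $f(k) := h_k + h_{k+n/2}$ with subscripts modulo $n$, so that
\[
K_{i,j} = A_{i,j} + B_{i,j} = h_{j-i+1} + h_{j-i+1+n/2} = f(j-i+1).
\]
The observation $f(k+n/2) = h_{k+n/2} + h_{k+n} = h_{k+n/2} + h_k = f(k)$ shows that $f$ is periodic with period dividing $n/2$. Consequently $K_{i,j}$ depends only on $(j-i) \bmod (n/2)$, which is exactly the statement that $K$ is circulant of order $n/2$. The entry bound $K_{i,j} \in \{-2,0,2\}$ is then immediate since each $h_k \in \{-1,1\}$.

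The only mild obstacle is the index bookkeeping --- tracking which reductions are modulo $n$ and which modulo $n/2$ --- but no conceptual work is needed: both parts are ultimately driven by the single identity $-n/2 \equiv n/2 \pmod n$, which simultaneously forces the symmetric block form in (a) and the extra periodicity in (b) that halves the order of the circulant.
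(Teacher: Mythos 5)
Your proof is correct, and it is precisely the ``obvious'' verification the paper has in mind: the paper in fact states Lemma \ref{weighing} without proof, describing the block decomposition as obvious. Your index computation via $H_{i,j}=h_{j-i+1}$ and the identity $-n/2\equiv n/2\pmod n$ correctly establishes both the block symmetry in (a) and the period-$n/2$ structure in (b), so nothing further is needed.
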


We build now the weighing matrix.

\begin{lemma}
\label{weighh}
Let $h$ be an odd positive integer. Assume that $H$ is a circulant Hadamard matrix of 
order $n,$ where $n=4h^2.$ Then, there exists a weighing matrix  $C$ of order $n/2$ and weight $n/4.$
\end{lemma}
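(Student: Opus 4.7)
The plan is to take $C := K/2$, where $K = A + B$ is the circulant matrix from Lemma \ref{weighing}(b), and verify that this $C$ has the required properties of a weighing matrix of order $n/2 = 2h^2$ and weight $n/4 = h^2$.

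First I would use part (b) of Lemma \ref{weighing}: since $K$ is circulant with entries in $\{-2,0,2\}$, dividing by $2$ produces a circulant matrix $C$ of order $n/2$ with entries in $\{-1,0,1\}$. So the only remaining thing to check is that $C C^T = (n/4) I_{n/2}$, equivalently $K K^T = n I_{n/2}$.

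Next I would expand the identity $H H^T = n I_n$ in terms of the block decomposition from Lemma \ref{weighing}(a). A straightforward block multiplication of
\begin{equation*}
\begin{bmatrix} A & B \\ B & A \end{bmatrix}
\begin{bmatrix} A^T & B^T \\ B^T & A^T \end{bmatrix}
= n I_n
\end{equation*}
yields the two relations $A A^T + B B^T = n I_{n/2}$ and $A B^T + B A^T = 0$. Then computing
\begin{equation*}
K K^T = (A+B)(A+B)^T = (A A^T + B B^T) + (A B^T + B A^T) = n I_{n/2},
\end{equation*}
gives exactly $C C^T = (n/4) I_{n/2} = h^2 I_{n/2}$, which is the weighing matrix identity with weight $k = h^2$.

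There is no real obstacle here: the argument is simply the standard block-decomposition trick combined with Lemma \ref{weighing}. The only points that deserve explicit mention are (i) the fact that $K$ has even entries, which is what allows the division by $2$ and is ensured by part (b) of Lemma \ref{weighing}, and (ii) that $C$ inherits the circulant structure from $K$, so that in combination with the arithmetic identity above it qualifies as a weighing matrix in the sense of Definition \ref{weig}. Since $h$ is a positive integer, $n/4 = h^2$ is a positive integer, so the hypotheses of Definition \ref{weig} are satisfied.
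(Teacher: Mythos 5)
Your proposal is correct and follows essentially the same route as the paper: set $C=(A+B)/2$, note it is circulant with entries in $\{-1,0,1\}$ by Lemma \ref{weighing}, and use the block relations $AA^{T}+BB^{T}=nI_{n/2}$ and $AB^{T}+BA^{T}=0$ coming from $HH^{T}=nI_{n}$ to conclude $4\,CC^{T}=nI_{n/2}$. No differences worth noting beyond your use of $T$ in place of the paper's $*$, which is immaterial since $H$ is real.
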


\begin{proof}
Set $C = \frac{A+B}{2}$ where $A$ and $B$ are defined by Lemma \ref{weighing}. One has then that $C$
is circulant, of order $n/2 = 2h^2$ with all its entries in $\{-1,0,1\}.$  From $H H^{*} =I_n,$ one gets by block multiplication
$AA^{*}+BB^{*} = n I_{n/2}$  and  $AB^{*}+BA^{*} =0.$  Thus,
\begin{equation}
\label{weic}
4 \cdot CC^{*}= AA^{*}+AB^{*}+BA^{*}+BB^{*} = AA^{*}+BB^{*} =n I_{n/2}.
\end{equation}
It follows from \eqref{weic} that $C$ is a weighing circulant matrix of order $n/2$ and weight $n/4.$
\end{proof}

We are now ready to show our main result from which, (essentially), we will be obtaining all our results.

\begin{lemma}
\label{maincomp}
Let $h$ be an odd positive integer exceeding $1$. Assume that $H$ is a circulant Hadamard matrix of 
order $n,$ where $n=4h^2.$ Let $p$ be a prime divisor of $h$ and  $r$ be the positive integer such that
$p^r \mid h$ but $p^{r+1} \nmid h.$ Set $s = h/p^r.$  Let $o_{m}(p)$ be the order of $p$ in the multiplicative group
$G = (\Z/m\Z)^{*}$
of inversible elements of the ring $\Z/m\Z$, where $m=2s^2.$ Then,
$$
o_m(p) 
$$
is an odd number.
\end{lemma}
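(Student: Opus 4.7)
The plan is to apply Arasu's theorem (Lemma \ref{arasumain}) to the circulant weighing matrix produced by Lemma \ref{weighh}. From the assumed circulant Hadamard matrix $H$ of order $n=4h^2$, Lemma \ref{weighh} yields a circulant weighing matrix $C$ of order $N=n/2=2h^2$ and weight $K=n/4=h^2$. With the factorization $h=p^r s$ (so $\gcd(p,s)=1$), I rewrite $N=p^{2r}\cdot m$ with $m=2s^2$ and $K=p^{2r}\cdot s^2=p^{2b}u^2$ with $b=r$, $u=s$. Since $h$ is odd, both $p$ and $s$ are odd, so $p\nmid m$ and $p\nmid u$; the hypotheses of Lemma \ref{arasumain} are therefore satisfied with these parameters.

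I argue by contradiction: suppose $o_m(p)=2e$ is even. Then $p^e$ is an involution in $(\Z/m\Z)^*$. Because $m=2s^2$ with $s$ odd, the Chinese Remainder Theorem decomposes
$$
(\Z/m\Z)^* \;\cong\; \prod_{q\mid s}(\Z/q^{2a_q}\Z)^*,
$$
a product of cyclic groups (the $(\Z/2\Z)^*$ factor being trivial), each of even order with $-1$ as its unique involution. One can then produce an integer $t$ with $p^t\equiv -1\pmod m$, and Lemma \ref{arasumain} immediately forces $p=2$, contradicting the oddness of $p$. Hence $o_m(p)$ must be odd.

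The delicate step is producing the exponent $t$ with $p^t\equiv -1\pmod m$ from mere evenness of $o_m(p)$. When $(\Z/m\Z)^*$ is cyclic (equivalently, when $s$ is a prime power), this is automatic: $p^{o_m(p)/2}$ is the unique element of order two and equals $-1$. In the general case $p^e$ is only guaranteed to be an involution, and it may equal $+1$ in some CRT components while equalling $-1$ in others, so an additional argument is required. The natural route is to apply Lemma \ref{arasumain} factor by factor to each prime-power component $q^{2a_q}$ of $m$ (legitimate because each $(\Z/q^{2a_q}\Z)^*$ is cyclic, and the underlying theorem of Arasu in \cite{Arasu} admits $m$ being any divisor of the prime-to-$p$ part of $N$); this shows each $o_{q^{2a_q}}(p)$ is odd, and hence $o_m(p)=\operatorname{lcm}_{q\mid s}o_{q^{2a_q}}(p)$ is odd. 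This reduction is the main obstacle I would need to make rigorous.
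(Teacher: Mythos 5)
You follow the same route as the paper: build the circulant weighing matrix $C$ of order $2h^2$ and weight $h^2$ via Lemma \ref{weighh} and apply Lemma \ref{arasumain} with $a=2r$, $b=r$, $m=2s^2$, $u=s$; your parameter bookkeeping agrees with the paper's. But the step you flag as delicate is a genuine gap, and you are right that you have not closed it. Evenness of $o_m(p)$ only makes $p^{o_m(p)/2}$ an involution of $(\Z/m\Z)^{*}$, and that involution is forced to equal $-1$ only when the group is cyclic, i.e.\ only when $s$ is $1$ or a power of a single odd prime. As soon as $s$ has two distinct prime factors (which is unavoidable in Theorem \ref{mainar} once $k\geq 3$), the hypothesis $p^{t}\equiv -1 \pmod{m}$ of Lemma \ref{arasumain} need not follow: for example $o_{450}(19)=10$, yet $19\equiv 1 \pmod{9}$, so no power of $19$ is $\equiv -1 \pmod{450}$. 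Your sketched repair, applying Arasu componentwise to each $q^{2a_q}$, is not licensed by Lemma \ref{arasumain} as stated, because there $m$ must be the \emph{entire} prime-to-$p$ part of the order of the weighing matrix ($n=p^{a}m$ with $p\nmid m$), not an arbitrary divisor of it; you would have to verify against \cite{Arasu} that the theorem tolerates a smaller self-conjugacy modulus, and you cannot take that for granted.

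You should also know that the paper's own proof makes precisely the unjustified leap you were wary of: it asserts that $o_{2s^2}(p)=2f$ implies $p^{f}\equiv -1\pmod{2s^2}$ with no argument, so the published proof is only valid when $(\Z/2s^2\Z)^{*}$ is cyclic. What the argument actually establishes for general $s$ is the weaker statement that no power of $p$ is congruent to $-1$ modulo $2s^2$ (i.e.\ $-1$ does not lie in the subgroup generated by $p$), which for $s>1$ is implied by, but does not imply, oddness of $o_{2s^2}(p)$. So your proposal and the paper share the same hole; your isolating it explicitly is the most valuable part of the attempt, and closing it requires either the componentwise version of Arasu's theorem you allude to or a reformulation of Lemma \ref{maincomp} with the weaker non-self-conjugacy conclusion.
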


\begin{proof}
Assume, to the contrary, that $o_{2s^2}(p)$ is even, say $o_{2s^2}(p) =2f.$ Then $p^f \equiv -1 \pmod{2s^2}.$
Then, by Lemma \ref{arasumain} applied to the weighing matrix $C$, of order $n/2$ and weight $n/4$,
defined by Lemma \ref{weighh} 
with $a = 2r,$ $b =r,$  $m = 2s^2$, and $u=s$ that are all positive integers, and observing that we have
$\gcd(p,m)=1$ and $\gcd(p,u)=1$, we obtain the contradiction
\begin{equation}
\label{tiersexclu}
p=2.
\end{equation}
This proves the lemma.
\end{proof}

\begin{remark}
\label{obs}
Of course, if in the proof of Lemma \ref{maincomp},
we apply Lemma \ref{arasumain}  to the full circulant weighing matrix $H$ (of order $n$, and of weight $n$),
instead to applying it to $C$,
we obtain no contradictions.
\end{remark}

In order to complete the results the following simple arithmetic result is key.

\begin{lemma}
\label{gauss}
Let $p$ and $q$ be two odd prime numbers such that the orders $o_q(p),$ the order of $p$ modulo $q,$
and $o_p(q),$ the order of $q$ modulo $p,$ are both odd. Then
$$
{p \overwithdelims () q} =1 \;\;\;\text{and}\;\;\; {q \overwithdelims () p} =1.
$$
where $\cdot \overwithdelims () \cdot$ is the Legendre's symbol.
\end{lemma}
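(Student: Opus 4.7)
My plan is to reduce each of the two claims to Euler's criterion and exploit the fact that in a cyclic group of even order, every element of odd order lies inside the (unique) index-two subgroup of squares. By symmetry, it suffices to prove $\left(\frac{p}{q}\right)=1$; then swapping the roles of $p$ and $q$ yields $\left(\frac{q}{p}\right)=1$ identically.

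Concretely, I would write $q-1 = 2^{a}\cdot m$ with $m$ odd and $a\geq 1$ (the latter since $q$ is an odd prime). Since $o_q(p)$ divides $q-1$ and is assumed odd, it must in fact divide the odd part $m$ of $q-1$, and hence it divides $(q-1)/2 = 2^{a-1}m$. Therefore $p^{(q-1)/2} \equiv 1 \pmod{q}$. Euler's criterion
\[
\left(\frac{p}{q}\right) \equiv p^{(q-1)/2} \pmod q
\]
then gives $\left(\frac{p}{q}\right) = 1$, as desired.

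The only even mildly subtle point is noticing that the assumption that $q$ is odd guarantees $a\geq 1$, so that $m$ really is a divisor of $(q-1)/2$ and not merely of $q-1$; this is what lets the odd divisor $o_q(p)$ slip into $(q-1)/2$. Apart from that, the argument is a one-line consequence of Euler's criterion, so I do not expect any genuine obstacle. Repeating the same reasoning with $p$ and $q$ interchanged (using that $p$ is also odd, so $p-1$ is even) finishes the lemma.
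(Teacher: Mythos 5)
Your proof is correct, but it travels a slightly different road than the paper's. You locate $p$ inside the subgroup of squares by a counting/divisibility argument: the odd order $o_q(p)$ must divide the odd part of $q-1$, hence divides $(q-1)/2$, so $p^{(q-1)/2}\equiv 1 \pmod q$ and Euler's criterion finishes. The paper instead exhibits an explicit square root: writing $d=o_q(p)$ (odd), it observes $p \equiv \bigl((1/p)^{(d-1)/2}\bigr)^2 \pmod q$, since $p^{1-d}=p\cdot p^{-d}\equiv p$. Both arguments rest on the same group-theoretic fact (an element of odd order in $(\Z/q\Z)^{*}$ is a square), but the paper's version is marginally more self-contained — it needs nothing beyond the definition of the order, not even Euler's criterion or Fermat's little theorem — and it is constructive, producing the square root. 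Your version is arguably more transparent conceptually, as it makes visible why oddness of the order is exactly the right hypothesis (it forces membership in the index-two subgroup), and your remark that $a\geq 1$ because $q$ is odd correctly pins down the one place where the hypothesis that $q$ is an odd prime enters. Either write-up would serve the paper equally well; the symmetry step at the end is handled the same way in both.
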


\begin{proof}
Since $d := o_q(p)$ is odd, we have $p \equiv {\left ({(1/p)}^{\frac{d-1}{2}} \right )}^2 \pmod q.$ Analogously
$e := o_p(q)$ odd implies $q \equiv {\left ({(1/q)}^{\frac{e-1}{2}}\right)}^2 \pmod p.$  The result follows.
\end{proof}

\section{ Proof of Theorem \ref{mainar} and of Proposition \ref{computA}}

\subsection{Proof of Theorem \ref{mainar}}

Assume that there are only a finite number of such odd integers $h.$  Then, by Lemma \ref{maincomp},
there exists some odd positive integer
$h_0$ such that for any odd integer $h$ with $h \geq h_0$  every prime number $p$ such that $p \mid h$,
say, $h = p^r \cdot s$, with $p^{r+1} \nmid h,$ satisfy
\begin{equation}
\label{oor}
o_{2s^2}(p)\;\;\text{is odd}.
\end{equation}
Thus, \eqref{oor} implies that for every odd prime divisor $q$ of $2s^2$ one has
\begin{equation}
\label{oor1}
o_{q}(p)\;\;\text{is odd}.
\end{equation}
Write now $h = q^{t}d$ with $q \nmid d.$ one has also,
\begin{equation}
\label{oor2}
o_{2d^2}(q)\;\;\text{is odd}.
\end{equation}
Thus, for every odd prime divisor $r$ of $2d^2$ one has
\begin{equation}
\label{oor3}
o_{r}(q)\;\;\text{is odd}.
\end{equation}
Choose $r=p$ in \eqref{oor3}.  One gets
\begin{equation}
\label{oor4}
o_{p}(q)\;\;\text{is odd}.
\end{equation}

This implies, by Lemma \ref{gauss} that  ${p \overwithdelims () q} =1$ and that 
${q \overwithdelims () p} =1$ for any other prime factor $q$ of $h.$  But this is false, since we can always choose
two distinct primes $p_1$ and $p_2$ both larger than $h_0$ and with, e.g.,
$$
{p_1 \overwithdelims () p_2} =1 \;\;\;\text{and}\;\;\; {p_2 \overwithdelims () p_1} =-1,
$$
and take
$$
h = p_1 \cdot p_2 \cdots p_k
$$
with any other distinct prime numbers (when $k>2$), $p_2, \ldots, p_k.$
This proves the theorem.

\subsection{Proof of Proposition \ref{computA}}

It is known (see \cite{BorMoss2}) that for all elements of $S$ but for a subset $T$
containing $1371$ elements $h$ the result holds. Using Lemma \ref{maincomp} and a
straightforward (included below for completeness) computer program that checked the conclusion
of the above lemma for all these $h$'s, and after about $7$ minutes of computation,
we obtained the result.

Here the program used:

\begin{verbatim}
#  n's 4*h**2, with constraints on its odd prime divisors

with(numtheory):

tesp := proc(h)
local p,m,par,pris,el,mo,rr;
pris := ifactors(h); pris := op(2,pris);
if nops(pris) = 1 then RETURN(0); fi;
for par in pris do
p := op(1,par); m := op(2,par); el := iquo(h,p**m); mo := 2*el**2; 
rr := order(p,mo);
if modp(rr,2) = 0 then RETURN(0); fi;
od;
RETURN(1);
end;

#  checks the 1371 elements of the list uvals

seelm := proc()
local p,lis,c,st;
st := time(); c := 0; lis :=[];
read(mike1):
for p in uvals do
if tesp(p) = 1 then print([c,[1371],p]); lis := [p,op(lis)]; fi;
c := c+1; 
if modp(c,100) = 0 then print([time() -st,c]) fi;
od;
lis;
end;

# the actual program runned is:

interface(prettyprint=0):
interface(quiet=true): st := time(); time() -st;
st := time(); z := seelm(); time() -st;

quit;
\end{verbatim}

\section{Barker sequences}
\label{barkers}

Suppose $x_1,x_2, \ldots, x_n$ is a sequence of $1$ and $-1$. We recall the following definition.

\begin{definition}
\label{barkerD}
A sequence $c_1,c_2, \ldots, c_{n-1},$ where
$$
c_j = \sum_{i=1}^{n-j} x_i \cdot  x_{i+j}
$$
and the subscripts are defined modulo $n$, is called a \emph{Barker} sequence of length $n$ provided
$c_j \in \{-1,0,1\},$ for all $j=1,2, \ldots n-1.$
\end{definition}

The main known result is the following, (see  \cite{turyn1}, \cite{shalomK}).

\begin{lemma}
\label{barkerk}
If there exists a Barker sequence of length $n>13$ then there exists a circulant Hadamard matrix of order $n.$
\end{lemma}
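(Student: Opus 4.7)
The plan is to combine the classical Turyn--Storer parity analysis of Barker sequences with a direct circulant construction.

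First, I would derive the parity relation $c_j \equiv n - j \pmod{2}$ from the fact that $c_j$ is a sum of $n - j$ terms in $\{\pm 1\}$. Together with the Barker constraint $|c_j| \le 1$, this forces $c_j = 0$ whenever $n - j$ is even, and $c_j \in \{-1, 1\}$ otherwise. Ruling out $n$ odd with $n > 13$ is the Turyn--Storer theorem, which propagates these sign constraints through short subsequence identities on the $x_i$'s until the system becomes inconsistent for $n > 13$. Ruling out $n \equiv 2 \pmod{4}$ proceeds by examining the induced ``half-sequences'' $(x_i x_{n/2+i})_{i}$ and showing their correlation constraints are incompatible with $n \not\equiv 0 \pmod{4}$. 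Hence $n \equiv 0 \pmod{4}$.

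Second, with $n \equiv 0 \pmod{4}$, set $H = circ(x_1, \ldots, x_n)$. A direct block computation shows that $HH^T$ is the circulant whose off-diagonal first-row entries are the periodic autocorrelations $\gamma_j = c_j + c_{n - j}$. By the parity step, $\gamma_j = 0$ for every even $j$, while for odd $j$ we have $c_j, c_{n-j} \in \{-1, 1\}$, so $\gamma_j \in \{-2, 0, 2\}$. The crux is to upgrade this bound to $\gamma_j = 0$, i.e., to establish the antisymmetry $c_j = -c_{n-j}$ for every odd $j$. This is Turyn's contribution: a cyclotomic-field analysis of the generating polynomial $p(z) = \sum_{i=1}^{n} x_i z^{i-1}$, using the factorization structure of $p(z) p(z^{-1}) - n$ at the $n$-th roots of unity together with the Barker constraints, forces the cancellation. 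Once the antisymmetry is in hand, $HH^T = n I_n$, and $H$ is a circulant Hadamard matrix of order $n$.

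The main obstacle is precisely the antisymmetry step: the parity argument alone only confines $\gamma_j$ to $\{-2, 0, 2\}$, and upgrading these values to zero requires the deep arithmetic rigidity that underlies the Turyn--Storer theory. By contrast, the preliminary parity reductions and the final block identification $HH^T = n I_n$ are routine, and the whole lemma reduces cleanly to the two cited references once the cyclotomic rigidity is accepted.
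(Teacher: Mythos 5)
The paper does not actually prove this lemma; it is quoted as known, with the entire content deferred to \cite{turyn1} and \cite{shalomK}. So any comparison is against the classical argument in those references. Your outline has the right overall shape (parity of the aperiodic correlations, Turyn--Storer for odd lengths, identification of the off-diagonal entries of $HH^T$ with the periodic correlations $\gamma_j=c_j+c_{n-j}$), but you have mislocated the difficulty, and the step you flag as the ``crux'' is handled by the wrong tool.

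The passage from $\gamma_j\in\{-2,0,2\}$ to $\gamma_j=0$ does not require any cyclotomic-field analysis of the generating polynomial, and I do not believe the argument you gesture at (``factorization structure of $p(z)p(z^{-1})-n$ at roots of unity forces the cancellation'') can be made to work as stated: the root-of-unity identities only encode the values $\gamma_j$, they do not by themselves force antisymmetry of the $c_j$. The actual mechanism is an elementary congruence. Writing $x_i=1-2d_i$ with $d_i\in\{0,1\}$ gives
\begin{equation*}
\gamma_j=\sum_{i=1}^{n}x_ix_{i+j}=n-4\sum_{i=1}^{n}d_i+4\sum_{i=1}^{n}d_id_{i+j}\equiv n \pmod 4,
\end{equation*}
since $\sum_i d_{i+j}=\sum_i d_i$ for a full cyclic shift. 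For even $n>2$ your parity step gives $\gamma_j=0$ for every even $j$, so $n\equiv 0\pmod 4$ (this also disposes of the $n\equiv 2\pmod 4$ case with no ``half-sequence'' analysis), and then for odd $j$ the constraint $\gamma_j\in\{-2,0,2\}$ together with $\gamma_j\equiv 0\pmod 4$ forces $\gamma_j=0$. Hence $HH^T=nI_n$ directly. The only genuinely deep ingredient in the lemma is the Turyn--Storer theorem excluding odd Barker lengths $>13$, which you correctly cite as a black box; Turyn's cyclotomic/algebraic-number-theoretic machinery enters only in the further restrictions on even Barker lengths (e.g.\ $n=4N^2$ and the self-conjugacy conditions), which are not needed here.
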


\begin{corollary}
\label{barkerdone}
For an infinity of odd integers $h$'s with an arbitrary fixed  number $k$ of  distinct prime divisors
 there do not exists a Barker sequence of length $4h^2 >13.$ Moreover,  there do not exists
a Barker sequence of length $4h^2 >13$ for all odd integers $h$ such that
$1<h<10^{13}.$
\end{corollary}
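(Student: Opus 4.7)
The plan is to use Lemma \ref{barkerk} in its contrapositive form as the only new ingredient: if no circulant Hadamard matrix of order $n$ exists and $n>13$, then no Barker sequence of length $n$ exists. Both halves of the corollary then reduce immediately to our already-established non-existence statements for circulant Hadamard matrices of order $4h^2$.

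For the first claim, I would fix $k>1$ and take the infinite set of odd integers $h$ with $\omega(h)=k$ produced by Theorem \ref{mainar}. Since such an $h$ is odd and strictly greater than $1$, one has $h\ge 3$ and therefore $4h^2\ge 36>13$, so the size hypothesis of Lemma \ref{barkerk} is automatic. Combining Theorem \ref{mainar} (no circulant Hadamard matrix of order $4h^2$) with the contrapositive of Lemma \ref{barkerk} rules out a Barker sequence of length $4h^2$ for each such $h$.

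For the second claim, I would simply let $h$ range over the odd integers with $1<h<10^{13}$; again $h\ge 3$ forces $4h^2>13$. Proposition \ref{computA} asserts that no circulant Hadamard matrix of order $4h^2$ exists for any such $h$, and the contrapositive of Lemma \ref{barkerk} then forbids a Barker sequence of that length.

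There is essentially no genuine obstacle here: the corollary is a mechanical transfer of the main results via Lemma \ref{barkerk}. The only point requiring any verification is the mild inequality $4h^2>13$, and this is forced by the standing hypothesis that $h$ is odd and $h>1$, so no separate case analysis for small $h$ is needed.
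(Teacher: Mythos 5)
Your proposal is correct and follows exactly the same route as the paper, which proves the corollary by citing Lemma \ref{barkerk} together with Theorem \ref{mainar} and Proposition \ref{computA}; you merely spell out the contrapositive and the trivial check that $4h^2>13$. Nothing further is needed.
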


\begin{proof}
Follows from Lemma \ref{barkerk}, from Theorem \ref{mainar} and from Proposition \ref{computA}.
\end{proof}

\section*{Acknowledgements}

We are indebted to Michael J. Mossinghoff for sending us a special file, ready for computations,
containing all odd integers $h$ less than $10^{13}$ (used in the proof of Proposition \ref{computA}, above)
for which all previous known results, cannot decide whether or not for $n=4h^2$ there exist a circulant  Hadamard matrix
of order $n.$ Thanks also to Carlos  M. da Fonseca for patience and to the anonymous referees of a related paper
of mine, containing a too optimistic result, for inspiration to build the present paper.


\begin{thebibliography}{99}

\bibitem{Arasu} K. T. Arasu,
\emph{A reduction theorem for circulant weighing matrices},
Australas. J. Combin. \textbf{18} (1998), p.~111--114.

\bibitem{BorMoss1} P. Borwein, M. J. Mossinghoff,
\emph{Wieferich pairs and Barker sequences},
Des. Codes Cryptogr. \textbf{53(3)} (2009), p. 149--163.

\bibitem{BorMoss2} P. Borwein, M. J. Mossinghoff,
\emph{Wieferich pairs and Barker sequences, II},
LMS J. Comput. Math. \textbf{17 (1)} (2014), 24--32.

\bibitem{brualdi} R. A. Brualdi,
\emph{A note on multipliers of difference sets}, 
J. Res. Nat. Bur. Standards Sect. B \textbf{69} (1965), p. 87--89.

\bibitem{Craigen}
R. Craigen, G. Faucher, R. Low, T. Wares, \emph{Circulant partial Hadamard matrices},
Lin. Alg. Appl. \textbf{439}, 3307-3317, 2013.
\bibitem{davis} P. J. Davis,
\emph{Circulant matrices}, 2nd ed.,
New York, NY: AMS Chelsea Publishing, xix, 250 p. (1994).

\bibitem{shalomK} S. Eliahou, M. Kervaire,
\emph{Corrigendum to:``Barker sequences and difference sets''} [Enseign. Math. (2) 38, no. 3-4, 
345--382, 1992],
Enseign. Math. (2) 40, no. 1-2, 109--111, 1994. 

\bibitem{EGR}
R. Euler, L. H. Gallardo, O. Rahavandrainy, \emph{Sufficient conditions for a conjecture of
Ryser about Hadamard Circulant matrices}, Lin. Alg. Appl. \textbf{437},
2877-2886, 2012.

\bibitem{lhg} L. Gallardo, \emph{On a special case of a conjecture of Ryser about Hadamard
circulant matrices}, 
Appl. Math. E-Notes \textbf{12},
182-188, 2012.

\bibitem{webmoss}
M. J. Mossinghoff, ``Wieferich prime pairs, Barker sequences, and circulant Hadamard matrices'', 2013,
http://www.cecm.sfu.ca/~mjm/WieferichBarker/.

\bibitem{Leung}
K. H. Leung, B. Schmidt, \emph{New restrictions on possible orders of circulant Hadamard matrices},
Designs, Codes and Cryptography \textbf{64}, 143-151, 2012.

\bibitem{ryser} H. J. Ryser,
Combinatorial mathematics. The Carus Mathematical Monographs,
 No. 14 Published by The Mathematical Association of America; distributed by 
John Wiley and Sons, Inc.,
 New York 1963 xiv+154 pp.

\bibitem{turyn1} J. Storer, R. Turyn, 
\emph{On binary sequences}, 
Proc. Am. Math. Soc. \textbf{12} (1961), p.~394--399.

\bibitem{lang} S. \textsc{Lang},
\emph{Algebra}.
3rd revised ed.
Graduate Texts in Mathematics 211, New York, NY: Springer, xv, 914 p.
(2002).

\end{thebibliography}
\end{document}